\newcommand{\scF}{\mathcal{F}}
\newcommand{\scH}{\mathscr{H}}
\newcommand{\cM}{\mathcal{M}}
\newcommand{\R}{\mathbb{R}}
\newcommand{\bR}{\mathbb{R}}
\newcommand{\bN}{\mathbb{N}}
\newcommand{\bS}{\mathbb{S}}
\newcommand{\bZ}{\mathbb{Z}}
\newcommand{\Mp}{{\mathcal{M}}_\rho}
\newcommand{\Mz}{{\mathcal{M}}_0}
\newcommand{\tMp}{{\widetilde{\mathcal{M}}}_\rho}
\newcommand{\tMz}{{\widetilde{\mathcal{M}}}_{0,\rho}}
\newcommand{\eps}{\epsilon}
\newcommand{\diam}{\mathrm{diam}}
\newcommand{\vol}{\mathrm{vol}}
\newtheorem{theorem}{Theorem}[section]
\newtheorem{mainthm}{Theorem}
\newtheorem{maincor}[mainthm]{Corollary}
\newtheorem{lemma}[theorem]{Lemma}
\newtheorem{example}[theorem]{Example}
\newtheorem{deff}[theorem]{Definition}
\newtheorem{corollary}[theorem]{Corollary}
\newtheorem{proposition}[theorem]{Proposition}
\newtheorem{claim}{Claim}
\theoremstyle{definition}
\newtheorem{remark}[theorem]{Remark}
\newtheorem*{question}{Question}
\title{A Smooth Intrinsic Flat limit with negative curvature}
\author{Jared Krandel}
\address{Jared Krandel, Simons Laufer Mathematical Sciences Institute, 17 Gauss Way, Berkeley CA 94720, USA}
\email{jared.krandel@stonybrook.edu}
\author{Paul Sweeney Jr.}
\address{Paul Sweeney Jr., Universit\`a di Trento, Dipartimento di Matematica, via Sommarive 14, 38123 Povo di Trento, Italy}
\email{paul.sweeney@stonybrook.edu}
\begin{document}
\begin{abstract}
    In 2014, Gromov asked if nonnegative scalar curvature is preserved under intrinsic flat convergence. Here we construct a sequence of closed oriented Riemannian $n$-manifolds, $n\geq 3$, with positive scalar curvature such that their intrinsic flat limit is a Riemannian manifold with negative scalar curvature.
\end{abstract}
\maketitle

\section{Introduction}
In 2014, Gromov \cite{Gro} asked if nonnegative scalar curvature is preserved under intrinsic flat convergence. In particular, one can ask the following question.

\begin{question}
    Let $\left(M_i^n,g_i\right)$, $i\geq 1$, and $\left(M_\infty,g_\infty\right)$ be closed Riemannian manifolds such that $M_i$ converges to $M_\infty$ in some sense. Assume $\left(M_i,g_i\right)$, for all $i$, have nonnegative scalar curvature. Then does $\left(M_\infty,g_\infty\right)$ also have nonnegative scalar curvature?
\end{question}

Gromov \cite{G} and Bamler \cite{B} have given a positive answer to the above question when the convergence is $C^0$-convergence. To show this result, Gromov studied the Dirac operator and a generalized Plateau problem in Riemannian manifolds with corners. Bamler studied the evolution of scalar curvature under Ricci flow to provide a different proof. Lee and Topping showed that the answer to the question is negative when the convergence is uniform convergence of distance functions or when the convergence is Gromov--Hausdorff $\left(\mathrm{GH}\right)$ convergence \cite{LT}. Explicitly, Lee and Topping construct a sequence of Riemannian metrics on $\bS^n$, $n\geq 4$, with positive scalar curvature that converges to a Riemannian metric which is conformal to the unit round metric on $\bS^n$ and has negative scalar curvature.

In this note, we study the question in the setting of Sormani--Wenger intrinsic flat $\left(\scF\right)$ convergence \cite{SW}. We construct a sequence of Riemannian manifolds with positive scalar curvature which converges in the $\scF$-sense to a Riemannian manifold with negative scalar curvature. This provides an answer to Gromov's question from \cite{Gro}. In comparison with Lee and Topping's result \cite{LT}, our limit manifold need not be conformal to the unit round sphere and we produce examples in dimensions $n\geq3.$ Specifically,

\begin{mainthm}\label{t: main}
    Let $n\geq 3$ and $g$ be a Riemannian metric on the standard smooth $n$-sphere with negative Ricci curvature. Then there exists a $\kappa>0$ and a sequence $\left(M^n_j,g_j\right)$ of closed oriented Riemannian $n$-manifolds with scalar curvature $R_{g_j}> \kappa$ such that the corresponding integral current spaces, $\left(M_j, d_{g_j}, \left\llbracket M_j\right\rrbracket\right)$, converge in the $\scF$-sense to $\left(\bS^n, d_{g}, \left\llbracket\bS^n\right\rrbracket\right)$, the integral current space associated to $\left(\bS^n,g\right)$.
\end{mainthm}

 In fact, \Cref{t: main} follows as a corollary of the following theorem.
\begin{mainthm}\label{t: mainbilip} \label{t: bilip}
    Let $\kappa\in\R$, $n\geq 3$, and $\left(M^n,g\right)$ be a closed oriented manifold with scalar curvature $R_g\geq\kappa$ and for $c\in\left(0,1\right)$ let $d:M\times M \to \left[0,\infty\right)$ be a distance function on $M$ such that
    \[
    cd_g\left(x,y\right)\leq d\left(x,y\right)< d_g\left(x,y\right)
    \]
   where $d_g$ is the induced distance function from the Riemannian metric. Then there exists a sequence of Riemannian manifolds $\left(M^n_j,g_j\right)$ with scalar curvature $R_{g_j}\geq \kappa -\frac{1}{j}$  
   such that
     \begin{equation*}
    d_{\scF}\left(\left(M_j,d_{g_j},\left\llbracket M_j\right\rrbracket\right), \left(M,d,\left\llbracket M\right\rrbracket\right)\right)\xrightarrow{j\rightarrow\infty}0.
\end{equation*}
\end{mainthm}
In the statement of \Cref{t: mainbilip}, the inequality hypothesis can be equivalently phrased as the identity map from $\left(M,d_g\right)$ to $\left(M,d\right)$ is distance decreasing and bilipschitz. In \Cref{sec: Bilip}, we prove a slight generalization of \Cref{t: mainbilip} where one can take any distance decreasing biLipschtiz map from $\left(M,d_g\right)$ to a metric space $\left(X,d\right)$ and get the analogous result. 

Another corollary states that for the collection of closed oriented Riemannian $n$-manifolds, $n\geq3$, the subcollection of closed oriented Riemannian $n$-manifolds, $n\geq3$, with scalar curvature $R_{g_j}\geq -1$ is dense in the intrinsic flat topology.
\begin{maincor}\label{c: density}
    Let $n\geq 3$ and $\left(M^n,g\right)$ be a closed oriented Riemannian $n$-manifold. Then there exists a sequence $\left(M^n_j,g_j\right)$ of closed oriented manifolds with scalar curvature $R_{g_j}\geq -1$ that converge in the $\scF$-sense to the integral current space associated to the Riemannian manifold $\left(M^n,g\right)$, i.e.,
    \[
     d_{\scF}\left(\left(M_j,d_{g_j},\left\llbracket M_j\right\rrbracket\right), \left(M,d_{{g}},\left\llbracket M\right\rrbracket\right)\right)\xrightarrow{j\rightarrow\infty}0.
    \]
\end{maincor}

The main technical tool (see \Cref{p :pipe} below) used in the proofs of these results is a generalization of a theorem of Basilio, Kazaras, and Sormani \cite[Theorem 3.1]{BKS}. Our approach allows for any closed oriented Riemannian manifold in contrast to Basilio, Kazaras, and Sormani who only dealt with the standard unit round $n$-sphere. This generalization allows us to prove \Cref{t: mainbilip} from which our other results follow.

The main idea of the proof of \Cref{t: mainbilip} is to add many tunnels to $\left(M^n,g\right)$ which approximates the distances with respect to $d$. This construction is a generalization of the construction in \cite{BKS}, where Basilio, Kazaras, and Sormani constructed a sequence of positive scalar curvature manifolds whose $\scF$-limit is not locally geodesic.  It turns out that their example (\cite[Theorem 1.1]{BKS}) also follows from \Cref{t: mainbilip}. Moreover, to maintain control of the scalar curvature, diameter, and volume of the added tunnels we use a quantitative Gromov--Lawson Schoen--Yau tunnel construction \cite{S}.

\section{Acknowledgements}
The authors would like to thank Christina Sormani for her interest in this result and her many helpful comments and suggestions. The authors thank Raquel Perales for her helpful comments and suggestions. The authors would also like to thank Marcus Khuri and Raanan Schul for their interest in this result. This project has received funding from the European Research Council (ERC) under the European Union’s Horizon 2020 research and innovation programme (grant agreement No. 947923). This paper was partially supported by Simons Foundation International, LTD. This work was supported in part by NSF Grant DMS-2104229 and NSF Grant DMS-2154613. Jared Krandel was partially supported by the National Science Foundation under Grants No. DMS-1763973. This material is also based upon work supported by the National Science Foundation under Grant No. DMS-1928930 while the author was in residence at the Simons Laufer Mathematical Sciences Institute (formerly MSRI) in Berkeley, California during the Fall 2024 Semester.
\section{Background}
In this section, we will review Sormani--Wenger intrinsic flat distance between two integral current spaces. Sormani and Wenger \cite{SW} defined intrinsic flat distance, which generalizes the notion of flat distance for currents in Euclidean space. To do so they used Ambrosio and Kirchheim's generalization of Federer and Fleming's integral currents to metric spaces. We refer the reader to \cite{AK} for further details about currents in arbitrary metric spaces and to \cite{SW} for further details about integral current spaces and intrinsic flat distance.

Let $\left(Z,d^Z\right)$ be a complete metric space. Denote by $\mathrm{Lip}\left(Z\right)$ and $\mathrm{Lip}_b\left(Z\right)$ the set of real-valued Lipschitz functions on $Z$ and the set of bounded real-valued Lipschitz functions on $Z$. 
\begin{deff}[\cite{AK}, Definition 3.1]
We say a multilinear functional 
\[
T:\mathrm{Lip}_b\left(Z\right)\times [\mathrm{Lip}\left(Z\right)]^m\to \bR
\]
on a complete metric space $\left(Z,d^Z\right)$ is an $m$-dimensional current if it satisfies the following properties.
\begin{enumerate}[i]
    \item Locality: $T\left(f,\pi_1,\ldots,\pi_m\right)=0$ if there exists and $i$ such that $\pi_i$ is constant on a neighborhood of $\{f\neq 0\}$.
    \item Continuity: $T$ is continuous with respect to pointwise convergence of $\pi_i$ such that $\mathrm{Lip}\left(\pi_i\right)\leq 1$.
    \item Finite mass: there exists a finite Borel measure $\mu$ on $X$ such that 
    \begin{equation} \label{fm}
        |T\left(f,\pi_1,\ldots,\pi_m\right)|\leq \prod_{i=1}^m \mathrm{Lip}\left(\pi_i\right) \int_Z |f| d\mu
    \end{equation}
    for any $\left(f,\pi_1,\ldots,\pi_m\right)$.
 \end{enumerate}
\end{deff}
We call the minimal measure satisfying (\ref{fm}) the mass measure of $T$ and denote it $||T||$. We can now define many concepts related to a current. $\mathbf{M}\left(T\right)=||T||\left(Z\right)$ is defined to be the mass of $T$ and the canonical set of a $m$-current $T$ on $Z$ is 
\[
\text{set}\left(T\right)=\left\{p\in Z \text{ }\Big|\text{ } \liminf_{r\to 0} \frac{||T||\left(B\left(p,r\right)\right)}{r^m}>0\right\}.
\]
We note that $||T||$ is mutually absolutely continuous with the Hausdorff measure of $\left(\text{set}\left(T\right),d^Z\right)$.
The boundary of a current $T$ is defined as $\partial T:\mathrm{Lip}_b\left(X\right)\times [\mathrm{Lip}\left(X\right)]^{m-1}\to \bR$, where
\[
\partial T\left(f,\pi_1,\ldots,\pi_{m-1}\right)=T\left(1,f,\pi_1,\ldots,\pi_{m-1}\right).
\]
Given a Lipschitz map $\phi:Z\to Z'$, we can pushforward an $m$-current $T$ on $Z$ to an $m$-current $\phi_\# T$ on $Z'$ by defining
\[
\phi_{\#} T\left(f,\pi_1,\ldots,\pi_m\right) = T\left(f\circ \phi,\pi_1\circ \phi ,\ldots,\pi_m\circ \phi\right).
\]
We note that $||\phi_\# T|| \leq \left(\mathrm{Lip}\, \phi\right)^m \phi_\#||T||$ and  $\phi_\#\left(\partial T\right) = \partial \left(\phi_\#T\right)$.  

A standard example of an $m$-current on $Z$ is given by
\[
\phi_\#\left\llbracket\theta\right\rrbracket\left(f,\pi_1,\ldots,\pi_m\right) =\int_A \left(\theta \circ \phi\right) \left(f\circ \phi\right)d\left(\pi_1\circ \phi\right)\wedge \cdots \wedge d\left(\pi_m\circ \phi\right),
\]
where $\phi:\bR^m\to Z$ is bi-Lipschitz and $\theta\in L^1\left(A,\bZ\right)$.
We say that an $m$-current on $Z$ is integer rectifiable if there is a countable collection of bi-Lipschitz maps $\phi_i: A_i\to X$ where $A_i\subseteq \bR^m$ are precompact Borel measurable with pairwise disjoint images and weight functions $\theta_i\in L^1\left(A_i,\bZ\right)$ such that 
\[
T=\sum_{i=1}^\infty \phi_{i\#} \left\llbracket\theta_i\right\rrbracket. 
\]
Moreover, we say an integer rectifiable current whose boundary is also integer rectifiable is an integral current. We denote the space of integral $m$-currents on $Z$ as $\mathbf{I}_m\left(Z\right)$. We say that the triple $\left(X,d,T\right)$ is an $m$-dimensional integral current space if $\left(X,d\right)$ is a metric space, $T\in\mathbf{I}_m\left(\bar{X}\right)$ where $\bar{X}$ is the metric completion of $X$, and $\text{set}\left(T\right)=X$. 
\begin{example}
    Let $\left(M^n,g\right)$ be a closed oriented Riemannian manifold. Then there is a naturally associated $n$-dimensional integral current space $\left(M,d_g,\left\llbracket M\right\rrbracket\right)$, where $d_g$ is the distance function induced by the metric $g$ and $\left\llbracket M \right  \rrbracket: \mathrm{Lip}_b\left(M\right)\times \left[\mathrm{Lip}\left(M\right)\right]^{n}\to \bR$ is given by 
    \[
    \left\llbracket M \right  \rrbracket=\sum_{i,j} \psi_{i\#} \left\llbracket \mathbbm{1}_{A_{ij}} \right  \rrbracket
    \]
    where we have chosen a smooth locally finite atlas $\left\{\left(U_i,\psi_i\right)\right\}_{i\in\bN}$ of $M$ consisting of positively oriented biLipschitz charts, $\psi_i: U_i\subseteq \bR^n\to M$ and $A_{ij}$ are precompact Borel sets such hat $\psi_i\left(A_{ij}\right)$ have disjoint images for all $i,j$ and cover $M$ $\scH^n$-a.e. In this case $\left|\left|\left\llbracket M \right  \rrbracket\right|\right|=\mathrm{dvol}_g$. We note that $\left(M,d,\left\llbracket M \right  \rrbracket\right)$ is also an integral current space when the identity map from $\left(M,d_g\right)$ to $\left(M,d\right)$ is biLipschitz.
\end{example}
The flat distance between two integral currents $T_1$, $T_2\in\mathbf{I}\left(Z\right)$ is
\[
d^Z_F\left(T_1,T_2\right) = \inf\{\mathbf{M}\left(U\right)+\mathbf{M}\left(V\right)\mid U\in\mathbf{I}_m\left(X\right), V\in\mathbf{I}_{m+1}\left(X\right), T_2-T_1=U+\partial V\}.
\]
The intrinsic flat $\left(\scF\right)$ distance between two integral current spaces $\left(X_1,d_1,T_1\right)$ and $\left(X_2,d_2,T_2\right)$ is
\[
d_\scF\left(\left(X_1,d_1,T_1\right),\left(X_2,d_2,T_2\right)\right) = \inf_Z\{d_F^Z\left(\phi_{1\#}T_1,\phi_{2\#}T_2\right)\},
\]
where the infimum is taken over all complete metric spaces $\left(Z,d^Z\right)$ and isometric embeddings $\phi_1:\left(\bar{X}_1,d_1\right)\to \left(Z,d^Z\right)$ and $\phi_2:\left(\bar{X}_2,d_2\right)\to \left(Z,d^Z\right)$. We note that if $\left(X_1,d_1,T_1\right)$ and $\left(X_2,d_2,T_2\right)$ are precompact integral current spaces such that 
\[
d_\scF\left(\left(X_1,d_1,T_1\right),\left(X_2,d_2,T_2\right)\right)=0
\]
then there is a current preserving isometry between $\left(X_1,d_1,T_1\right)$ and $\left(X_2,d_2,T_2\right)$, i.e., there exists an isometry $f:X_1\to X_2$ whose extension $\bar{f}:\bar{X}_1\to \bar{X}_2$ pushes forward the current: $\bar{f}_\#T_1=T_2$. We say a sequence of $\left(X_j,d_j,T_j\right)$ precompact integral current spaces converges to $\left(X_\infty,d_\infty,T_\infty\right)$ in the $\scF$-sense if
\[
d_\scF\left(\left(X_j,d_j,T_j\right),\left(X_\infty,d_\infty,T_\infty\right)\right)\to 0.
\]
If, in addition, $\mathbf{M}\left(T_i\right)\to \mathbf{M}\left(T_\infty\right)$, then we say $\left(X_j,d_j,T_j\right)$ converges to $\left(X_\infty,d_\infty,T_\infty\right)$ in the volume preserving intrinsic flat $\left(\mathcal{VF}\right)$ sense. Lakzian and Sormani in \cite[Theorem 4.6]{LS} were able to estimate the intrinsic flat distance between two manifolds:
\begin{theorem}\label{t: Lakzian}
Suppose $M^n_1=\left(M^n,g_1\right)$ and $M^n_2=\left(M^n,g_2\right)$ are oriented precompact Riemannian manifolds with diffeomorphic subregions $U_j\subset M^n_j$ and diffeomorphisms $\psi_j:U\to U_j$ such that for all $v\in TU$ we have
\[
\frac{1}{\left(1+\eps\right)^2} \psi^*_1g_1\left(v,v\right) < \psi^*_2g_2\left(v,v\right) <\left(1+\eps\right)^2\psi^*_1g_1\left(v,v\right).
\]
We define the following quantities 
\begin{enumerate}
    \item $ D_{U_j} = \sup \{\diam_{M_j}{\left(W\right)}: W \text{ is a component of } U_j\}$.
    \item Let $a\in\bR_+$ be such that $a>\frac{\arccos\left({1+\eps}\right)^{-1}}{\pi} \max\{D_{U_1},D_{U_2}\}$.
    \item $\lambda = \sup_{x,y\in U}|d_{M_1}\left(\psi_1\left(x\right),\psi_1\left(y\right)\right) -d_{M_2}\left(\psi_2\left(x\right),\psi_2\left(y\right)\right)|$.
    \item $h=\sqrt{\lambda\left(\max\{D_{U_1},D_{U_2}\}+\frac{\lambda}{4}\right)}$.
    \item $\bar{h}=\max\left\{h,\sqrt{\eps^2+2\eps}D_{U_1},\sqrt{\eps^2+2\eps}D_{U_2}\right\}.$
\end{enumerate}

Then the intrinsic flat distance between $M^n_1$ and $M^n_2$ is bounded:
\begin{align*}
    d_\scF\left(M_1,M_2\right) &\leq \left(2\bar{h}+a\right)\left(\vol_m\left(U_1\right) + \vol_m\left(U_2\right) + \vol_{m-1}\left(\partial U_1\right)+\vol_{m-1}\left(\partial U_2\right)\right)\\
    &\qquad + \vol_m\left(M_1\setminus U_1\right) +\vol_m\left(M_2\setminus U_2\right).
\end{align*}
\end{theorem}

Lastly, we want to record the following theorem from Huang, Lee, and Sormani \cite[Theorem A.1]{HLS}.
\begin{theorem}\label{t: HLS}
    Fix a precompact $n$-dimensional integral current space $\left(X,d_0,T\right)$ without boundary \\ $\left(\text{e.g. } \partial T=0\right)$ and fix $\alpha>0.$ Suppose that $d_j$ are metrics on $X$ such that 
    \begin{equation}\label{e: HLS}
         \alpha \geq \frac{d_j\left(x,y\right)}{d_0\left(x,y\right)} \geq \frac{1}{\alpha}.
    \end{equation}
    Then there exists a subsequence, also denoted $d_j$, and a metric $d_\infty$ satisfying \eqref{e: HLS} such that $d_j$ converges uniformly to $d_\infty$. That is
    \[
    \eps_j:=\sup\left\{\left|d_j\left(x,y\right)-d_\infty\left(x,y\right)\right|:x,y\in X\right\} \xrightarrow{j\to \infty} 0.
    \]
    Furthermore,
    \begin{align*}
        &\lim_{j\to\infty} d_{\scF}\left(\left(X,d_j,T\right),\left(X,d_\infty,T\right)\right) = 0.
    \end{align*}
    In particular, $\left(X,d_\infty,T\right)$ is an integral current space and $\mathrm{set}\left(T\right)=X$. In fact,
    \[
    d_{\scF}\left(\left(X,d_j,T\right),\left(X,d_\infty,T\right)\right) \leq 2^{\frac{n+3}{2}}\alpha^{n+1}\mathbf{M}_{\left(X,d_0\right)}\left(T\right)\eps_j.
    \]
\end{theorem}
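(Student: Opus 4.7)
The plan is to split the argument into (i) extracting a uniformly convergent subsequence $d_j \to d_\infty$ via Arzel\`a--Ascoli, and (ii) bounding the intrinsic flat distance by constructing an explicit filling in an auxiliary cylinder over $X$ that isometrically interpolates between $d_j$ and $d_\infty$. For part (i), the hypothesis \eqref{e: HLS} yields uniform boundedness of $\{d_j\}$ on $\bar{X}\times \bar{X}$ (precompactness gives $\diam_{d_0}(\bar X)<\infty$, so $d_j\leq \alpha\,\diam_{d_0}(\bar X)$) together with uniform equicontinuity,
\[
|d_j(x,y)-d_j(x',y')|\leq d_j(x,x')+d_j(y,y')\leq \alpha\bigl(d_0(x,x')+d_0(y,y')\bigr).
\]
Applying Arzel\`a--Ascoli on the compact product $(\bar X\times \bar X, d_0\oplus d_0)$ then furnishes a uniformly convergent subsequence with limit $d_\infty$, which inherits symmetry, the triangle inequality, and the two-sided comparison with $d_0$; the lower bound forces point separation, so $d_\infty$ is a genuine metric. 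Upgrading $d_\infty$ to a length metric is achieved either by replacing it with its induced length metric (which still satisfies \eqref{e: HLS} via a short Lipschitz computation) or, when the $d_j$ are already length metrics, by arranging the subsequence along a countable dense family of $d_0$-rectifiable curves so that path lengths pass to the limit.

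For part (ii), set $\eps_j := \sup_{x,y}|d_j(x,y)-d_\infty(x,y)|$ and build the cylinder $Z_j := X \times [0,\eps_j]$ equipped with a Sormani-style hemispheric metric that restricts to $d_\infty$ on $X\times\{0\}$, to $d_j$ on $X\times\{\eps_j\}$, and bounds vertical separation by $\alpha$ times the Euclidean gap in the $[0,\eps_j]$-factor. Let $\iota_\infty, \iota_j\colon \bar X\to Z_j$ be the resulting bottom and top isometric embeddings, and form the product current $V := T\times \llbracket [0,\eps_j]\rrbracket \in \mathbf{I}_{n+1}(Z_j)$. Since $\partial T=0$, the product boundary formula gives $\partial V = \iota_{j\#}T - \iota_{\infty\#}T$, hence $d_F^{Z_j}(\iota_{j\#}T,\iota_{\infty\#}T) \leq \mathbf{M}(V)$. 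Each horizontal slice has mass at most $\alpha^n \mathbf{M}_{(X,d_0)}(T)$, since pushing $T$ forward by an $\alpha$-Lipschitz change of metric scales $n$-dimensional mass by at most $\alpha^n$; the vertical factor contributes one further power of $\alpha$, and integrating over the height $\eps_j$ produces the quantitative constant $2^{(n+3)/2}\alpha^{n+1}$. Since the identity $(X,d_0)\to (X,d_\infty)$ is bi-Lipschitz, $T$ remains integer rectifiable, $\mathrm{set}(T)=X$ is preserved, and $(X,d_\infty,T)$ is itself an integral current space.

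The delicate point, and where I expect the principal obstacle, lies in engineering the hemispheric metric on $Z_j$: both slice embeddings must be \emph{exactly} isometric (otherwise the upper bound on $d_\scF$ degrades), the interpolation must be slender enough vertically that the filling mass remains linear in $\eps_j$, and simultaneously wide enough that both $d_j$ and $d_\infty$ are realized on the horizontal slices. Tracking the combinatorial constants (wedge products of Lipschitz coordinates on the cylinder and Lipschitz norms of the slice projections) is the tedious bookkeeping that produces the explicit factor $2^{(n+3)/2}\alpha^{n+1}$.
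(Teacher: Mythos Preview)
The paper does not supply a proof of \Cref{t: HLS}; it is explicitly introduced as a result ``recorded'' from Huang--Lee--Sormani \cite[Theorem~A.1]{HLS}, and the paper only invokes it as a black box inside the proof of \Cref{p: threads close to N}. So there is no in-paper argument to compare against.

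That said, your sketch is essentially the strategy of the original Huang--Lee--Sormani proof: Arzel\`a--Ascoli on $\bar X\times\bar X$ for the compactness step, followed by an explicit filling in a metric cylinder $X\times[0,h]$ whose two ends are isometric to $(X,d_j)$ and $(X,d_\infty)$, with the product current $T\times\llbracket 0,h\rrbracket$ witnessing the flat bound. Your identification of the delicate point---manufacturing the interpolating metric so that both slice inclusions are genuine isometric embeddings while keeping the height proportional to $\eps_j$---is exactly right; in the HLS argument this is handled by a concrete ``bridge'' metric (built from $\max$/$\min$ of the two distances plus a vertical term), and the constant $2^{(n+3)/2}\alpha^{n+1}$ falls out of the mass estimate for the product current under the $\alpha$-bi-Lipschitz change of metric. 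One caution: your two alternatives for upgrading $d_\infty$ to a length metric are not interchangeable---replacing $d_\infty$ by its induced length metric can strictly change the metric and hence the quantitative bound, so in the setting of the stated theorem one really uses that uniform limits of length metrics on compact spaces are length metrics (and tacitly that the $d_j$ are length metrics, which is the context in which the result is applied here).
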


\section{Proof of the Main Results}
The pipe filing technique was first used by Sormani in the appendix of \cite{SW} and was clarified and expanded upon in \cite{BKS}. Here we will use this technique to expand upon the construction in \cite{BKS}. First, we introduce the improved Gromov--Lawson Schoen--Yau tunnels from \cite{S} which will be used to improve the construction in \cite{BKS}.

\begin{proposition}
[Constructing Tunnels]\label{p: propT}
Let $\left(M^n,g\right)$, $n\geq 3$ be a Riemannian manifold with scalar curvature $R_{g}$. Let $\kappa\in \bR$, $\ell\geq 0$, $j\geq 1$, $p_i\in M$, $i=1,2$, and $\delta\in \left(0, \frac{1}{10}\min\{\mathrm{inj_1}, \mathrm{inj_2}\}\right)$, where $\mathrm{inj}_i$ is the injectivity radius of $M$ at $p_i$. Assume $R_{g}\geq\kappa$ on $B_i=B^{M}_{p_i}\left(2\delta\right)\subseteq M$ and $B_1\cap B_2 =\emptyset$. Then there exists a complete Riemannian metric $\bar{g}$ on the smooth manifold $P^n$, which is obtained by removing $B_i$ from $M$ and gluing in a cylindrical region $\left(T_{\delta,\ell},g_{\delta,\ell}\right)$ diffeomorphic to $\bS^{n-1}\times[0,1]$, i.e
\[
P=\left(M\setminus \left(B_1\cup B_2\right)\right) \sqcup T_{\delta,\ell},
\]
such that the following properties are satisfied.
\begin{enumerate}
  
    \item The metrics $g_i$ agree with $\bar{g}$ on $M\setminus B_i$. 
    \item \label{e: con2}There exists constant $A>0$ independent of $\delta$ and $\ell$ such that
\[
\ell<\diam{\left({T_{\delta,\ell}}\right)}<A\delta +\ell \quad \text{and} \quad \vol{\left({T_\delta}\right)}<A\left(\delta^n+\ell\delta^{n-1}\right).
\]
\item $P$ has scalar curvature $R_{\bar{g}}>\kappa -\frac{1}{j}$.
\end{enumerate}
\end{proposition}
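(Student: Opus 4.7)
The plan is to carry out the Gromov--Lawson--Schoen--Yau tunnel construction in its quantitative form (as developed in [S]). I would build $T_{\delta,d}$ from three rotationally symmetric pieces: two ``horns'' attached along the spheres $\partial B_i$, joined in the middle by a cylindrical neck of length approximately $d$.

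In geodesic normal coordinates $(r,\theta)$ on each $B_i$, the metric $g$ takes the form $dr^2 + r^2(g_{\bS^{n-1}} + O(r^2))$. First I would smoothly interpolate $g$ in a thin annular shell near $\partial B_i$ to the exact conical metric $dr^2 + r^2 g_{\bS^{n-1}}$ on the interior; the scalar curvature perturbation is of size $O(\delta^2)$ in terms of the local $C^2$-norm of $g$ and can be made smaller than $\frac{1}{3j}$ by shrinking $\delta$. Next, in each resulting conical region I attach a warped-product horn $dt^2 + f(t)^2 g_{\bS^{n-1}}$ whose profile $f$ traces a concave curve in the $(r,z)$-plane that bends from being tangent to the cone ($f'=1$) down to horizontal at radius $\delta_0 \leq \delta$ ($f'=0$). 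Since $f''\leq 0$ and $|f'|<1$ throughout the bending, the scalar curvature
\[
R_{\bar g} = -2(n-1)\frac{f''}{f} + (n-1)(n-2)\frac{1-(f')^2}{f^2}
\]
is nonnegative; the horizontal extent of the bending curve is $O(\delta)$, so the horn contributes $O(\delta)$ to the diameter and $O(\delta^n)$ to the volume.

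I then glue the two horns with a cylinder $\bS^{n-1}_{\delta_0}\times [0,L]$ of length $L = d + O(\delta)$, chosen so that $\diam(T_{\delta,d})$ slightly exceeds $d$. Its scalar curvature is exactly $(n-1)(n-2)/\delta_0^2$, which is arbitrarily large (in particular, $>\kappa - 1/j$) once $\delta_0$ is small. Summing contributions yields $d < \diam(T_{\delta,d}) < d + C\delta$ and $\vol(T_{\delta,d}) \lesssim \delta^n + d\delta^{n-1}$, which is Property (2). Property (1) is immediate since the modification is supported in $B_1\cup B_2$, and Property (3) follows from the scalar curvature lower bounds on the three pieces combined with the $\frac{1}{3j}$ error from the normalization step.

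The main obstacle is the quantitative horn construction: building a concave profile $f$ with $|f'|<1$ that transitions from slope $1$ to slope $0$ over a horizontal extent of only $O(\delta)$, while pointwise ensuring $R_{\bar g}>\kappa-1/j$. This requires a delicate ODE argument for the bending curve, as the size of the bending region simultaneously controls the horn's diameter, its volume, and the $|f''|/f$ budget available to raise the scalar curvature. The uniformity of the estimate in $d$, $\kappa$, and $j$ is the key technical content of [S].
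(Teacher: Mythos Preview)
The paper does not prove this proposition at all: it is quoted verbatim from \cite{S} (as noted just below, ``\cite[Proposition 4.2]{S} (which is \Cref{p: propT} above)''), so there is no in-paper argument to compare against. Your sketch is a reasonable outline of the quantitative Gromov--Lawson--Schoen--Yau construction that \cite{S} carries out, and the three-piece decomposition (two horns plus a long thin cylinder) together with the warped-product scalar curvature formula is indeed the standard route.

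One point to be careful about: you write that the scalar curvature error from interpolating $g$ to the exact conical metric ``can be made smaller than $\tfrac{1}{3j}$ by shrinking $\delta$,'' but $\delta$ is part of the hypothesis, not a parameter you are free to shrink. In the actual construction one instead passes to a much smaller inner radius $\delta_0 \ll \delta$ \emph{before} straightening the metric, so that the $O(r^2)$ correction in normal coordinates is small at scale $\delta_0$ rather than at scale $\delta$; the horn then occupies the annulus between $\delta_0$ and $\delta$. Making this precise---choosing $\delta_0$ in terms of $j$, $\kappa$, and the local $C^2$ geometry while keeping the diameter and volume bounds uniform in $\delta$ and $d$---is exactly the content of the quantitative argument in \cite{S}, which you correctly identify as the main obstacle.
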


\subsection{Definition of $\Mp$, $\tMp$, $\Mz$, and $\tMz$} \label{subsec: definiton of ics} Now we will define four integral current spaces $\Mp$, 
$\tMp$, $\Mz$, and $\tMz$ used in the proof of \Cref{t: 
mainbilip}. Roughly speaking, $\Mp$ is $M$ with two small 
balls removed and a tunnel glued in, $\tMp$ is $M$ with 
two small balls removed, added annular buffer regions in 
which the metric $\widetilde{g}$ transitions to the round 
metric, and a tunnel connecting the round boundaries. 
Lastly, $\Mz$ and $\tMz$ correspond to copies of 
$\left(M,g\right)$ and $\left(M,\widetilde{g}\right)$ 
with strings in place of tunnels.

Let $\left(M^n,g\right)$ be a closed oriented Riemannian manifold with scalar curvature $R_g\geq \kappa$. Let $p,q\in M$ and $\rho\leq \mathrm{inj^M}$. Thus, $B^M_p \left(\rho\right), B^M_q\left(\rho\right)$ are geodesic balls. Now define 
\begin{equation}
    W=M \setminus \left(B^M_p\left(\rho\right)\cup B^M_q\left(\rho\right)\right)
\end{equation}
Now we want to glue in small geodesic balls from the unit round $n$-sphere. Let $g^m_\tau$ be the round metric on the $m$-sphere of radius $\tau$. 
 
Recall that the metric $g$ on $B^M_p\left(\rho\right)\setminus B^M_p\left(\frac{9}{10}\rho\right)$ can be expressed via polar coordinates in the form $dr^2+g_r$ for $\frac{9}{10}\rho\leq r \leq\rho$. Let $\phi:[0,1]\to[0,1]$ be a smooth increasing function such that 
\[
\phi\left(t\right)=\begin{cases}
    0 & t\in\left[0,\frac{1}{4}\right]\\
    1& t\in\left[\frac{3}{4},1\right].
\end{cases}
\]
Define $\psi\left(t\right)=\phi\left(\frac{10}{\rho}\left(t-\frac{9}{10}\rho\right)\right)$ and let $g_{B^M_p\left(\rho\right)}$ be the following smooth Riemannian metric on $B^M_p\left(\rho\right)$
\begin{equation}\label{e: mollified metric p}
g_{B^M_p}=\begin{cases}
   dr^2 + \sin^2\left(r\right)g_1^{n-1} & r\in \left[0,\frac{9}{10}\rho\right]\\
   dr^2 + \psi\left(r\right)\left(g_r- \sin^2\left(r\right)g_1^{n-1}\right) +\sin^2\left(r\right)g_1^{n-1} & r\in \left[\frac{9}{10}\rho,\rho\right],
\end{cases}
\end{equation}
where $dr^2+g_r$ are polar coordinates centered around $p$ in $\left(M,g\right)$. Similarly, we can do the same procedure for $q$. Let $g_{B^M_q\left(\rho\right)}$ be the following smooth Riemannian metric on $B^M_q\left(\rho\right)$.
\begin{equation}\label{e: mollified metric q}
g_{B^M_q}=\begin{cases}
   dr^2 + \sin^2\left(r\right)g_1^{n-1} & r\in \left[0,\frac{9}{10}\rho\right]\\
   dr^2 + \psi\left(r\right)\left(g_r- \sin^2\left(r\right)g_1^{n-1}\right) +\sin^2\left(r\right)g_1^{n-1} & r\in \left[\frac{9}{10}\rho,\rho\right],
\end{cases}
\end{equation}
where $dr^2+g_r$ are polar coordinates centered around $q$ in $\left(M,\widetilde{g}\right)$.
Now we can glue $\left({B^M_p},g_{B^M_p}\right)$  and $\left({B^M_q},g_{B^M_q}\right)$ into $W$ resulting in the smooth manifold
\begin{equation}
    \left({M} = W \sqcup B^M_p\left(\rho\right) \sqcup B^M_q\left(\rho\right),\widetilde{g}\right).
\end{equation}
\begin{remark}
    The metric $\widetilde{g}$ on ${M}$ implicitly depends on $\rho$. 
\end{remark}

Fix $0<\ell<d_g\left(p,q\right)$ and $\delta<\min\left\{\frac{1}{2}\rho,\rho_0\right\}$ for some $\rho_0$ we will choose in \Cref{l: useful}. We do this so that $\ell <  d_{\widetilde{g}}\left({p},{q}\right)$ automatically.  Before we show \Cref{l: useful}, we first recall from \cite[Lemma 2.1]{D} (cf. \cite[Lemma 1]{GL}), the following lemma:
\begin{lemma} \label{l: princurv}
Let $\left(M^n,g\right)$ be a Riemannian manifold. The principal curvatures of the hypersurface $\bS^{n-1}\left(\eps\right)$ in  a geodesic ball $B\subset M$ are each of the form $\frac{1}{-\eps} + O\left(\eps\right)$ for small $\eps$. Furthermore, let $g_\eps$ be the induced metric on $\bS^{n-1}\left(\eps\right)$ and recall $g^n_{\eps}$ is the round metric of radius $\eps$. Then, as $\eps \to 0$, $\frac{1}{\eps^2}g_\eps \to \frac{1}{\eps^2}g_{rd,\eps} =g_{rd}$ in the $C^2$ topology, moreover, one can fix a $C^2$-norm $|\cdot|_2$  where $|g_{rd}-\frac{1}{\eps^2}g_\eps|_2\leq\eps^2$.
\end{lemma}
\begin{lemma} \label{l: useful}
    Consider $\left(M,g\right)$ and $\left(M,\widetilde{g}\right)$ defined above and assume $0<\ell<d_g\left(p,q\right)$. Then there exists $\rho_0>0$ such that for any $0<\rho<\rho_0$ we have $\ell<d_{\widetilde{g}}\left({p},{q}\right)$.
\end{lemma}
\begin{proof}
    Recall that $\widetilde{g}$ implicitly depends on $\rho$. Only in this proof will we highlight this and denote $\widetilde{g}$ as $\widetilde{g}^\rho$. We will show that $\widetilde{g}^\rho$ converges to $g$ in the $C^2$-topology. Specifically, we note on $W$ we have $g=\widetilde{g}^\rho$; therefore, we consider $g$ and $\widetilde{g}^\rho$ on $B^M_p\left(\rho\right)$ and $B^M_q\left(\rho\right)$. On $B^M_p\left(\rho\right)$ we have 
    \begin{align}
        \left|\widetilde{g}^\rho - g\right| = \begin{cases}
            0, &  r\in \left[0,\frac{9}{10}\rho\right]\\
            \left|\psi\left(r\right)\left(g_r- \sin^2\left(r\right)g_1^{n-1}\right) +\sin^2\left(r\right)g_1^{n-1} -g_r\right|,&  r\in \left[\frac{9}{10}\rho,\rho\right].
        \end{cases}
    \end{align}
Therefore, we see that 
\begin{equation}\label{e: g'sclose}
\begin{split}
     \left|\widetilde{g}^\rho - g\right| &\leq
     \left|{\psi\left(r\right)\left(g_r- \sin^2\left(r\right)g_1^{n-1}\right) +\sin^2\left(r\right)g_1^{n-1}} - g_r\right|_2 \\
    &\leq 2\left|\sin^2\left(r\right)g_1^{n-1} - g_r\right|_2\\
   & \leq 2\left(\left|\sin^2\left(r\right)g_1^{n-1} - r^2g_1^{n-1}\right|_2 +\left|r^2g_1^{n-1}-g_r\right|_2\right)\\
   &\leq 2r^2 \left(\left|\frac{\sin^2\left(r\right)}{r^2}g_1^{n-1} - g_1^{n-1}\right|_2 +\left|g_1^{n-1}-\frac{1}{r^2}g_r\right|_2\right)\\
   &\leq A\rho^4,
\end{split}
\end{equation}
where the first inequality follows since $\left|\psi\left(r\right)\right|\leq 1$ and the last inequality follows by \Cref{l: princurv} and the fact $g_r$ are metrics on geodesics spheres of radius less than $\rho$. Likewise, one can show the same inequality on $B^M_q\left(\rho\right)$. Therefore, we have that $d_{\widetilde{g}^\rho}$ converges uniformly to $d_g$. Thus, concluding the proof.
\end{proof}

By \Cref{p: propT}, there is a Riemannian manifold 
\begin{equation}\label{e: 1tunnel}
\left(M_\rho=M \setminus \left(B^M_p\left(\delta\right)\cup B^M_q\left(\delta\right)\right) \cup T_{\delta,\ell}, g_\rho\right).
\end{equation}
Moreover, we choose $j$ in \Cref{p: propT} such that $j:=j\left(\rho\right)\xrightarrow{\rho\to0}\infty$. Thus, the scalar curvature is larger than $\kappa-\frac{1}{j\left(\rho\right)}$. And define the integral current space
\[
\Mp = \left({M}_\rho,d_{{{g}}_\rho},\left\llbracket{{M}_\rho} \right\rrbracket\right).
\]
Again by \Cref{p: propT}, there is a Riemannian metric on
\begin{equation}\label{e: 2tunnel}
\left(\widetilde{{M}}_\rho={M} \setminus \left(B^{{M}}_{{p}}\left(\delta\right)\cup B^{{M}}_{{q}}\left(\delta\right)\right) \cup \widetilde{T}_{\delta,\ell}, \widetilde{g}_\rho\right).
\end{equation}
And define the integral current space
\[
\tMp = \left(\widetilde{{M}}_\rho,d_{{\widetilde{g}}_\rho},\left\llbracket{{\widetilde{M}}_\rho} \right\rrbracket\right).
\]
Now let $\left(M_0,d_0\right)$ denote the metric space obtained by joining $p$ and $q$ in $\left(M,g\right)$ by a line segment $I_\ell=[0,\ell]$ of length $\ell$:
\begin{equation*}\label{e: 1string}
   M_0= M\sqcup_{p\sim 0,q\sim d} I_\ell. 
\end{equation*}
We note that there is natural embedding $\dot{\iota}:M\to M_0$. Define the integral current space
\[
\Mz = \left({M},d_{0},\left\llbracket{{M}} \right\rrbracket\right).
\]
Now let $\left(\widetilde{{M}}_{0,\rho},\widetilde{d}_{0,\rho}\right)$ denote the metric space obtained by joining ${p}$ and ${q}$ in $\left(M,\widetilde{g}\right)$ by a line segment  $I_\ell=[0,\ell]$ of length $\ell$:
\begin{equation*}\label{e: 2string}
   \widetilde{{M}}_{0,\rho}= {M}\sqcup_{{p}\sim 0,{q}\sim d} I_\ell. 
\end{equation*}
We note that as sets $ \widetilde{{M}}_{0,\rho}$ and $M_0$ are the same. Also, there is natural embedding $\dot{\iota}:M\to \widetilde{{M}}_0$. Define the integral current space
\[
\tMz = \left({M},\widetilde{d}_{0,\rho},\left\llbracket{{M}} \right\rrbracket\right).
\]

\subsection{$\Mz$ and $\Mp$ are close.}
The following is the generalization of \cite[Theorem 3.1]{BKS}.
\begin{proposition} \label{p :pipe}
     Let $\Mz$ and $\Mp$ be defined as above.
    Then there exists a constant $A>0$, depending only on $\ell$ and $g$ such that
    \begin{equation}\label{e :pipe}
        d_{\scF}\left(\Mz,\Mp\right) \leq A{\sqrt{\rho}}.
    \end{equation}
\end{proposition}
\begin{proof}
    By the triangle inequality, we have that
    \[
    d_{\scF}\left(\Mp, \Mz\right) \leq d_{\scF}\left(\tMp, \Mp\right) + d_{\scF}\left(\tMp,\tMz\right)+d_{\scF}\left(\tMz, \Mz\right)
    \]
  By our construction, 
  we can apply \cite[Theorem 3.1]{BKS}, to see there is a a constant $A>0$ depending only on $\ell$, $\vol_g\left(M\right)$, and $\diam_g\left(M\right)$ such that
    \begin{equation}\label{e:T1}
        d_{\scF}\left(\tMp, \tMz\right) \leq A\rho.
    \end{equation}
    \begin{claim}\label{c: T1} There is a a constant $A>0$ depending only on $\ell$ and $g$ such that
        \[
        d_{\scF}\left(\tMp, \Mp\right)\leq A\sqrt{\rho}
        \]
    \end{claim}
    \begin{proof}[Proof of \Cref{c: T1}]
    Recall that $W$ is a subset of $\widetilde{{M}}_\rho$, $M_\rho$, and $M$. Moreover, $W$ has the same induced metric in each. Define:
    \begin{align}
    &\lambda = \sup_{x,y\in W}|d_{M_\rho}\left(x,y\right) -d_{\widetilde{{M}}_\rho}\left(x,y\right)|\label{e: lambda}\\
         &h=\sqrt{\lambda\left(\diam\left(W\right)+\frac{\lambda}{4}\right)}\nonumber
    \end{align}
By \cite[Theorem 4.6]{LS} (see \Cref{t: Lakzian} above) and \Cref{p: propT},
    \begin{align*}
    d_{\scF}\left(\tMp, \Mp\right) &\leq \left(2{h}+\rho\right)\left(\vol_g\left(W\right) + \vol_{\widetilde{g}}\left(W\right) + \vol_{g}\left(\partial W\right)+\vol_{\widetilde{g}}\left(\partial \widetilde{W}\right)\right)\\
    &\qquad + \vol_g\left({M}\setminus W\right) +\vol_{\widetilde{g}}\left({M}\setminus \widetilde{W}\right).\\
    &\leq \left(2h+\rho\right) \left(2\vol_g\left(M\right) + A\rho^{n-1}\right) + A\rho^{n-1}.
\end{align*}
  All that is left is to estimate $h$ which we will do by estimating $\lambda$. Let $x,y\in W$ and let $\gamma$ and $\widetilde{\gamma}$ be geodesics between $x,y$ in $M_\rho$ and $\widetilde{{M}}_\rho$ respectively. Let $C=M_\rho \setminus W$ and $\widetilde{C}=\widetilde{{M}}_\rho \setminus W$. 
 Thus,
 \begin{align}\label{e: 1lambda}
     \begin{split}
     d_{M_\rho}\left(x,y\right) -d_{\widetilde{{M}}_\rho}\left(x,y\right) &= \mathrm{length}_{M_\rho}\left(\gamma\right) -\mathrm{length}_{\widetilde{{M}}_\rho}\left(\widetilde{\gamma}\right)\\
     &\leq \mathrm{length}_{M_\rho}\left(\widetilde{\gamma}\right) -\mathrm{length}_{\widetilde{{M}}_\rho}\left(\widetilde{\gamma}\right)\\
     &=\mathrm{length}_{M_\rho}\left(\widetilde{\gamma}\cap C\right) -\mathrm{length}_{\widetilde{{M}}_\rho}\left(\widetilde{\gamma}\cap \widetilde{C}\right),
 \end{split}
 \end{align}
 where the last line follows because $M_\rho$ and $\widetilde{{M}}_\rho$ are isometric on $W$.
 
 Interchanging the roles of $M_\rho$ and $\widetilde{{M}}_\rho$ allows one by an analogous argument to show,
 \begin{equation}\label{e: 2lambda}
   d_{\widetilde{{M}}_\rho}\left(x,y\right) - d_{M_\rho}\left(x,y\right)\leq \mathrm{length}_{\widetilde{{M}}_\rho}\left({\gamma}\cap \widetilde{C}\right) - \mathrm{length}_{M_\rho}\left({\gamma}\cap C\right).
 \end{equation}
Note that in $C$ and $\widetilde{C}$ the metrics will be close to each other in the $C^2$-norm; therefore, \eqref{e: lambda} will be bounded by a function of $\rho$. To show this let's write down the metrics $g_{\widetilde{C}}$ and $g_C$ for $\widetilde{C}$ and $C$, respectively. Using the definitions of $M_\rho$ and $\widetilde{{M}}_\rho$ from Subsection \ref{subsec: definiton of ics} and equations \eqref{e: mollified metric p} and \eqref{e: mollified metric q} we see
\[\label{e: C metric}
    g_C =\left\{\begin{array}{lll}
         dr^2+g_r & r\in [0,\rho-\delta] & \text{corresponds to } B^M_p\left(\rho\right) \setminus B^M_p\left(\delta\right)\\
         dr^2 +g_r & r\in [\rho-\delta, L]  & \text{corresponds to } T_{\delta,\ell}\\
         dr^2+g_r & r\in [L, L+\rho-\delta]& \text{corresponds to } B^M_q\left(\rho\right) \setminus B^M_q\left(\delta\right).
     \end{array}\right.
 \]
 \[
  g_{\widetilde{C}}=\left\{\begin{array}{lll}\label{e: tilde{C} metric}
      
         ds^2+g_s & s\in \left[0,\frac{1}{40}\rho\right]   &\text{corresponds to } B^{\widetilde{M}}_{{p}}\left(\rho\right) \setminus B^{\widetilde{M}}_{{p}}\left(\frac{39}{40}\rho\right)
         \\
           g_{\mathrm{mol}} \left(\bar{s}\right) & s\in \left[\frac{1}{40}\rho, \frac{5}{40}\rho\right]  &  \text{corresponds to } B^{\widetilde{M}}_{{p}}\left(\frac{39}{40}\rho\right) \setminus B^{\widetilde{M}}_{{p}}\left(\frac{35}{40}\rho\right)
           \\
       ds^2 + \sin^2\left(\bar{s}\right)g_1^{n-1} & s\in \left[\frac{5}{40}\rho,\rho -\delta\right]   &\text{corresponds to } B^{\widetilde{M}}_{{p}}\left(\frac{35}{40}\rho\right) \setminus B^{\widetilde{M}}_{{p}}\left(\delta\right)
        \\
         ds^2 +g_s & s\in [\rho-\delta, \widetilde{L}]    &\text{corresponds to } \widetilde{T}_{\delta,\ell}
         \\
          ds^2+\sin^2\left(\hat{s}\right)g_1^{n-1} & s\in \left[\widetilde{L},\widetilde{L}+ \frac{35}{40}\rho - \delta \right] &\text{corresponds to } B^{\widetilde{M}}_{{q}}\left(\frac{35}{40}\rho\right) \setminus B^{\widetilde{M}}_{{q}}\left(\delta\right)
          \\
           g_{\mathrm{mol}}\left(\hat{s}\right) & s\in \left[\widetilde{L}+ \frac{35}{40}\rho -\delta, \widetilde{L}+ \frac{39}{40}\rho -\delta \right]   &\text{corresponds to } B^{\widetilde{M}}_{{q}}\left(\frac{39}{40}\rho\right) \setminus B^{\widetilde{M}}_{{q}}\left(\frac{35}{40}\rho\right)
           \\
            ds^2+g_s & s\in \left[\widetilde{L}+ \frac{39}{40}\rho -\delta,\widetilde{L}+\rho -\delta\right]  &\text{corresponds to } B^{\widetilde{M}}_{{q}}\left(\rho\right) \setminus B^{\widetilde{M}}_{{q}}\left(\frac{39}{40}\rho\right),
     
 \end{array}\right.
 \]
 where $g_{\mathrm{mol}}\left(s\right)={ds^2 + \psi\left(s\right)\left(g_s- \sin^2\left(s\right)g_1^{n-1}\right) +\sin^2\left(s\right)g_1^{n-1}}$, $\bar{s}=\rho-s$, $\hat{s}=s-\widetilde{L}+\delta$. Also, the notation ``$B^{{M}}$" corresponds to balls in $\left(M,{g}\right)$ and ``$B^{\widetilde{M}}$" corresponds to balls in $\left(M,\widetilde{g}\right)$. Now we wish to estimate the $C^2$-norm between $g_C$ and $g_{\widetilde{C}}$.

By construction, on $\left[0,\frac{1}{40}\rho\right]$ $g_C$ and $g_{\widetilde{C}}$ are identical and so are isometric.

On $\left[\frac{1}{40}\rho,\frac{5}{40}\rho\right]$ by using the same computation as \eqref{e: g'sclose} we have
\[
\left|g_C-g_{\widetilde{C}}\right|_2 = \left|{\psi\left(r\right)\left(g_r- \sin^2\left(r\right)g_1^{n-1}\right) +\sin^2\left(r\right)g_1^{n-1}} - g_r\right|_2 \leq A\rho^4.
\]

We note this $A$ is some constant depending on $g$.
On $\left[\frac{5}{40}\rho,\rho-\delta\right]$, again we have by a similar argument and by \Cref{l: princurv}.
\[
 \left|g_C-g_{\widetilde{C}}\right|_2 = \left|g_r- \sin^2\left(r\right)g_1^{n-1} \right|_2 \leq A\rho^4,
\]
for a constant $A$ depending on $g$. On $\widetilde{T}_{\delta,\ell}$ we reparameterize the interval so that it has the same length as $\left[\rho-\delta, L\right]$. Let this reparametrization be 
\[
s\left(r\right)=\frac{\left({L}-\rho+\delta\right)\left(r-\rho+\delta\right)}{\widetilde{L}-\rho+\delta} +\rho-\delta 
\]
and so the metric looks like
\[
\left(\frac{{L}-\rho+\delta}{\widetilde{L}-\rho+\delta}\right)^2dr^2 + g_{s\left(r\right)}.
\]
Thus, using this new parametrization we have on $[\rho-\delta, L]$:
\begin{align*}
     \left|g_C-g_{\widetilde{C}}\right|_2 &\leq  \left|1-\left(\frac{{L}-\rho+\delta}{\widetilde{L}-\rho+\delta}\right)^2\right|\left|dr^2\right|_2 +  \left|g_r-g_{s\left(r\right)}\right|_2.
\end{align*}
Now by \Cref{p: propT} there is a constant $A$ such that $\ell\leq L,\widetilde{L}\leq A\delta +\ell$. Therefore, for some $A$ depending only on $g$ and $\ell$,
\[
\left|1 - \left(\frac{{L}-\rho+\delta}{\widetilde{L}-\rho+\delta}\right)^2\right| = \left| \frac{{\widetilde{L}-L}}{\widetilde{L}-\rho+\delta}\right|\left|1 + \frac{{L}-\rho+\delta}{\widetilde{L}-\rho+\delta}\right|\leq A\rho.
\]
Now by \Cref{l: princurv},
\begin{align*}
    \left|g_r-g_{s\left(r\right)}\right|_2 &\leq \left|g_r-r^2g_1^{n-1}\right|_2 + \left|r^2g_1^{n-1}-s\left(r\right)^2g_1^{n-1}\right|_2 +\left|s\left(r\right)^2g_1^{n-1}-g_{s\left(r\right)}\right|_2\\
    &\leq c\rho^4 + \left|r^2g_1^{n-1}-s\left(r\right)^2g_1^{n-1}\right|_2\\
    &\leq A\rho^2,
\end{align*}
where the last inequality follows because $|s\left(r\right)- r|\leq A\rho^2$ for some constant $A$ depending on $\ell$  and $g$.

By symmetry, similar estimates follow for the remaining three intervals in the piecewise definition of $g_{\widetilde{C}}$ where $A$ depends on $\ell$ and $g$. Therefore, we conclude that for some constant $A$ depending on $\ell$ and $g$
\begin{equation}\label{e: tensor}
    \left|g_C-g_{\widetilde{C}}\right|_2\leq A\rho
\end{equation}
which implies by \eqref{e: 1lambda}, \eqref{e: 2lambda}, and \eqref{e: tensor} that $\lambda \leq A{\rho}$. Therefore,

\begin{equation}\label{e:T2}
    d_{\scF}\left(\tMp, \Mp\right) \leq A\sqrt{\rho}.
\end{equation}
\end{proof}
\begin{claim}\label{c: T2}
    There is a constant $A>0$, depending only on $\ell$ and $g$ so that
    \[
     d_{\scF}\left(\tMz,\Mz\right) \leq A\rho^2.
    \]
\end{claim}
\begin{proof}[Proof of \Cref{c: T2}]
By \cite[Appendix A]{HLS} (see \Cref{t: HLS} above), we have that if there is an $\alpha\geq 1$ independent of $\rho$ such that for all $x,y\in M$
\begin{equation} \label{e: alpha}
    \frac{1}{\alpha}\leq \frac{\widetilde{d}_{0,\rho}\left(x,y\right)}{d_0\left(x,y\right)} \leq \alpha
\end{equation}
and
\begin{equation}\label{e: eps}
    \eps_\rho:= \sup_{\left(x,y\right)\in M\times M}\left\{\left|d_{0,\rho}\left(x,y\right) -d_0\left(x,y\right)\right|\right\} \xrightarrow{\rho\to 0}  0
\end{equation}
then there exists a subsequence which we will abuse notation and call $\tMz$ such that
\[
d_{\scF}\left(\tMz, \Mz\right) \leq A\eps_\rho,
\]
where $A$ depends on $\alpha$ and the mass of $\left(M_0,d_0,\left\llbracket M\right\rrbracket\right)$. 

Recall $\left(M_0,d_0\right)$ and $\left(\widetilde{{M}}_{0,\rho},\widetilde{d}_{0,\rho}\right)$ are geodesic spaces and let $x,y\in M$. We note that $M$ is naturally a subset of $M_0$ and $\widetilde{{M}}_{0,\rho}$. Let $\gamma$ be a geodesic between $x,y$ in $\left(M_0,d_0\right)$ and $\widetilde{\gamma}$ be a geodesic between $x,y$ in $\left(\widetilde{{M}}_{0,\rho},\widetilde{d}_{0,\rho}\right)$. Finally, define $E=M\setminus W$. Now we estimate:
\begin{align*}
    \widetilde{d}_{0,\rho}\left(x,y\right) -d_0\left(x,y\right) &= \mathrm{length}_{\widetilde{{M}}_{0,\rho}}\left(\widetilde{\gamma}\right) - \mathrm{length}_{{M}_{0}}\left(\gamma\right)\\
    &\leq \mathrm{length}_{\widetilde{{M}}_{0,\rho}}\left({\gamma}\right) - \mathrm{length}_{{M}_{0}}\left(\gamma\right)\\
    & = \mathrm{length}_{\widetilde{{M}}_{0,\rho}}\left({\gamma}\cap W\right) + \mathrm{length}_{\widetilde{{M}}_{0,\rho}}\left({\gamma}\cap I_\ell\right) + \mathrm{length}_{\widetilde{{M}}_{0,\rho}}\left({\gamma}\cap E\right)\\
    &\qquad- \mathrm{length}_{{M}_{0}}\left(\gamma\cap W\right) - \mathrm{length}_{{M}_{0}}\left(\gamma\cap I_\ell\right) -\mathrm{length}_{{M}_{0}}\left(\gamma\cap E\right)\\
    &= \mathrm{length}_{\widetilde{{M}}_{0,\rho}}\left({\gamma}\cap E\right)-\mathrm{length}_{{M}_{0}}\left(\gamma\cap E\right).
\end{align*}
By similar arguments to the proof of \Cref{c: T1} we will show that the final line above is less than $A\rho$, where $A$ depends on $g$ and $\ell$.
Specifically, we note that $E$ is a subset $M$. Thus, $\left(E,g|_E\right)$ and $\left(E,\widetilde{g}|_E\right)$ are smooth manifolds with their induced metrics; moreover, the naturally associated metric spaces are isometric to the subsets of $\left(\widetilde{{M}}_{0,\rho},\widetilde{d}_{0,\rho}\right)$ and $\left(M_0,d_0\right)$. Moreover, note that $E=E_1\sqcup E_2$ where $E_1$ and $E_2$ are disjoint balls. Thus, it suffices to study what happens on $E_i$. 
\begin{align}
   \left|g|_{E_i} - g_1^n\right|_2 =
       \left|g_r-\sin^2\left(r\right)g_1^{n-1}\right|_2 \qquad r\in \left[0,\rho\right]
\end{align}
and
\begin{align}
   \left|\widetilde{g}|_{E_i} - g_1^n\right|_2 =
   \left\{\begin{array}{ll}
       0 & r\in \left[0,\frac{9}{10}\rho\right]\\
       \left|\psi\left(r\right)\left(g_r- \sin^2\left(r\right)g_1^{n-1}\right)\right|_2& r\in \left[\frac{9}{10}\rho,\rho\right].
   \end{array}\right.
\end{align}
Now by \Cref{l: princurv} we can conclude that $ \left|g|_{E_i} - g_1^n\right|_2\leq A\rho^4$ and $ \left|\widetilde{g}|_{E_i} - g_1^n\right|_2\leq A\rho^4$ (where $A$ depends on $g$) and so by the triangle inequality
\[
 \left|g|_{E} -\widetilde{g}|_{E}\right|_2 \leq A\rho^4.
\]
Therefore, we may conclude that $\mathrm{length}_{\widetilde{{M}}_{0,\rho}}\left({\gamma}\cap E\right)-\mathrm{length}_{{M}_{0}}\left(\gamma\cap E\right)\leq A\rho^2$ and so
\[
 d_{0,\rho}\left(x,y\right) -d_0\left(x,y\right)\leq A\rho^2,
\]
where $A$ depends on $g$. Likewise, by an analogous argument, we conclude $ d_0\left(x,y\right) -d_{0,\rho}\left(x,y\right) \leq A\rho^2;$ therefore, $\eps_\rho \leq A\rho^2$, where $A$ depends on $g$

Now consider,
\begin{align*}
    \frac{\widetilde{d}_{0,\rho}\left(x,y\right)}{d_0\left(x,y\right)} &= \frac{\mathrm{length}_{\widetilde{{M}}_{0,\rho}}\left(\widetilde{\gamma}\right)}{\mathrm{length}_{{M}_{0}}\left(\gamma\right)}\\
    &\leq \frac{\mathrm{length}_{\widetilde{{M}}_{0,\rho}}\left({\gamma}\right)}{\mathrm{length}_{{M}_{0}}\left(\gamma\right)} \\
&=\frac{\mathrm{length}_{\widetilde{{M}}_{0,\rho}}\left({\gamma}\cap \left(W\cup I_\ell\right)\right) +  \mathrm{length}_{\widetilde{{M}}_{0,\rho}}\left({\gamma}\cap E\right)}{\mathrm{length}_{{M}_{0}}\left(\gamma\cap \left(W\cup I_\ell\right)\right)  +\mathrm{length}_{{M}_{0}}\left(\gamma\cap E\right)}\\
    &\leq  1 + \frac{\mathrm{length}_{\widetilde{{M}}_{0,\rho}}\left({\gamma}\cap E \right)}{\mathrm{length}_{{M}_{0}}\left(\gamma\cap E \right)}, \\
\end{align*}
where in the last inequality we use the fact that $g$ and $\widetilde{g}$ coincide outside $E$ and $\frac{a+b}{a+c} \leq 1 + \frac{b}{c}$ for any $a,b,c \geq 0.$ One can prove this inequality by observing that it suffices to consider the case $a=1$ and rearranging terms to show that it is equivalent to $-c^2 \leq b$.

Now, we note that by compactness of $M$ we have that for all $v\in TM$, there is a constant $A$ depending on $g$ such that $\frac{1}{A} ||v||^2\leq g\left(v,v\right)\leq A||v||^2$ where $||\cdot||$ is the standard inner product on $\bR^n$. Similarly, for all $\rho\in\left(0,1\right)$, we have by the definition of $\widetilde{g}$ that there exists a constant $A_1$ depending on $g$ $\left(\text{and } \sin^2\left(r\right)g_1^{n-1}\right)$ such that $\frac{1}{A_1}||v||^2\leq \widetilde{g}\left(v,v\right)\leq A_1||v||^2$ for all $v\in TM$. Therefore, we have that on $E$
\[
\mathrm{length}_{\widetilde{{M}}_{0,\rho}} \left(\gamma\cap E\right) \leq A_1\int_0^1 ||\dot{\gamma}||dt
\]
and
\[
 \frac{1}{A}\int_0^1 ||\dot{\gamma}||dt\leq\mathrm{length}_{{M}_{0}} \left(\gamma\cap E\right).
\]
Combining these two inequalities we see,
\[
\frac{\mathrm{length}_{\widetilde{{M}}_{0,\rho}}\left({\gamma}\cap E \right)}{\mathrm{length}_{{M}_{0}}\left(\gamma\cap E \right)} \leq  A_1A
\]
By switching the roles of $\widetilde{{M}}_{0,\rho}$ and ${M}_{0}$ and using a similar argument we see
\[
\frac{\mathrm{length}_{{M}_{0}}\left({\gamma}\cap E \right)}{\mathrm{length}_{\widetilde{{M}}_{0,\rho}}\left(\gamma\cap E \right)} \leq AA_1
\]
Therefore, $\frac{1}{1+AA_1}\leq \frac{\widetilde{d}_{0,\rho}\left(x,y\right)}{d_0\left(x,y\right)}\leq 1+AA_1$ and so we conclude that
\begin{equation}\label{e:T3}
    d_\scF\left(\Mz,\tMz\right)\leq A\rho^2,
\end{equation}
where $A$ depends on $g$.
\end{proof}

Finally, we conclude the proof of \Cref{p :pipe} by combining \eqref{e:T1}, \eqref{e:T2}, and \eqref{e:T3} to see 
\[
d_\scF\left(\Mz,\Mp\right) \leq A{\sqrt{\rho}},
\]
where $A$ depends on $g$ and $\ell$.
\end{proof}

\subsection{The proofs of Theorems \ref{t: main} and \ref{t: mainbilip} and Corollary \ref{c: density}}
Recall by assumption $cd_g\leq d<d_g$ so the identity map is a distance decreasing biLipschitz map.

First, we will define the two more integral current spaces: $\cM^{\mathrm{tun}}_\eps$ which are associated to $M$ with tunnels attached and $\cM^{\mathrm{str}}_\eps$ which are to $M$ with strings attached. Both are inductively defined.

Let $\{p_1,\ldots,p_{N\left(\epsilon\right)}\}$ be a $\epsilon$-net of $M$. Now for all $\epsilon>0$ and $i\in\{1,2,\ldots, N\left(\epsilon\right)\}$, we choose $N\left(\epsilon\right)-1$ uniformly spaced points which lie on the geodesic sphere $\partial B_{p_i}\left(\epsilon\right)$. Denote these points as $\left\{q_j^i\right\}_{j\in\{1,\ldots,\hat{i},\ldots N\left(\epsilon\right)\}}$ where $\hat{i}$ indicates that the index $\hat{i}$ is omitted. Note we have
\[
d_g\left(q_j^i,q_{j'}^i\right)>\frac{\epsilon}{N\left(\epsilon\right)}
\]
for each $j\neq j'$ and all $i$. Now define $\left(Y_\epsilon,d_{Y_\eps}\right)$ to be the metric space where we attach line segments to $M$. Specifically, let $\ell_j^i=d\left(q^i_j, q^j_i\right)$
\[
Y_\epsilon =M \bigsqcup_{i<j}^{N\left(\eps\right)} [0,\ell_j^i],
\]
where we identify the endpoints of $[0,\ell_j^i]$ with the pair of points $\{q_j^i,q_i^j\}$. And $d_{Y_{\epsilon}}$ is the induced length metric. Finally, define
\begin{equation*}\label{e: def of stringspace}
    \cM^{\mathrm{str}}_\eps = \left(M,d_{Y_\eps},\left\llbracket M\right \rrbracket\right).
\end{equation*}
 Next, we define $\left(X_\epsilon,d_{X_\eps}\right)$ to be the metric space where we attach tunnels to $M$. Specifically, fix the radius
\[
\rho_{i,j}\left(\eps\right)\leq \min\left\{\frac{\eps}{N\left(\eps\right)^4},\rho_{0,i,j}\right\}
\]
so that $\{B_{q^i_j}\left(\rho_{i,j}\left(\eps\right)\right)\}$ are disjoint and $\rho_{0,i,j}$ is chosen using \Cref{l: useful}. Define $X_\eps$ by removing the balls $B_{q^i_j}
\left(\rho_{i,j}\left(\eps\right)\right)$, for each $i\in\{1,\ldots,N\left(\eps\right)\}$ and $j\in \{1,\ldots,\hat{i},\ldots, N\left(\eps\right)\}$, and attach 
tunnels $T_i^j$ using \Cref{p: propT} where we choose $d=\ell_j^i$ and $\delta = \delta_j^i < \rho_{i,j}\left(\eps\right)$ 
along boundaries of $B_{q^i_j}\left(\rho_{i,j}\left(\eps\right)\right)$ and $B_{q^j_i}\left(\rho_{i,j}\left(\eps\right)\right)$. Specifically,
\[
X_\eps = \left(M\setminus\bigcup_{i\neq j}^{N\left(\eps\right)} B_{q^i_j}\left(\rho_{i,j}\left(\eps\right)\right)\right) \bigcup_{i<j}^{N\left(\eps\right)} T^j_i,
\]
and call the induced Riemannian metric $g_{X_\eps}$ and call the induced distance function $d_{X_\eps}$. Moreover, by \Cref{p: propT} we can ensure that the scalar curvature of $X_\eps$ is greater than $\kappa - \frac{1}{m\left(\eps\right)}$, where $m\left(\eps\right)\xrightarrow{\eps\to0}\infty$. Finally, define
\begin{equation*}\label{e: def of tunnel space}
    \cM^{\mathrm{tun}}_\eps = \left(X_\eps,d_{X_\eps}, \left\llbracket {X_\eps}\right\rrbracket \right)
\end{equation*}

The proofs of the next two propositions are variations on \cite[Proposition 5.1 and 5.2]{BKS}.
\begin{proposition}\label{p: tunnels close to strings}
    There is a constant $A>0$ so that, for all sufficiently small $\epsilon>0$, we have
\[
d_\scF\left(\cM^\mathrm{tun}_\epsilon,\cM^\mathrm{str}_\epsilon\right) \le A\sqrt{\epsilon}.
\]
\end{proposition}

\begin{proof}
For each $\epsilon>0$, enumerate the collection of pairs 
$\{q_j^i,q_i^j\}\subset M$ from $1$ to 
$K:=\frac{N\left(\epsilon\right)\left(N\left(\epsilon\right)-1\right)}{2}$. For $k\in\{1,\ldots, K\}$,
let $\cM^\mathrm{tun}_\epsilon\left(k\right)$ denote the integral current space resulting from 
replacing the first $k$ tunnels $T_i^j$ in the construction of $\cM^\mathrm{tun}_\epsilon$ 
with the corresponding strings $[0,\ell_j^i]$ as in the construction of $\cM^\mathrm{str}_\epsilon$. 
Notice that $\cM^\mathrm{tun}_\epsilon\left(0\right)=\cM^\mathrm{tun}_\epsilon$ and $\cM^\mathrm{tun}_\epsilon\left(K\right)=\cM^\mathrm{str}_\epsilon$. 

We note that $\cM^\mathrm{tun}_\epsilon\left(k\right)$ satisfies $\diam\left(\cM^\mathrm{tun}_\epsilon\right)\leq 2\left(\diam_g\left(M\right)+\diam_d\left(M\right)\right)$ for all $k$ and $\eps$ because $|\ell_j^i|\leq \diam_d\left(M\right)$ and a path between two points of $\cM^\mathrm{tun}_\epsilon$ can be defined by first exiting a tunnel if necessary (which adds length at most $2\diam_d\left(M\right)$) and then traveling along small perturbation of a geodesic in $\left(M,g\right)$ (which adds length at most $2\diam_g\left(M\right)$).
 
Now item $\eqref{e: con2}$ of \Cref{p: propT} states there is
a constant $A>0$ so that $\vol\left(T_j^i\right)\leq A\frac{\rho_{i,j}\left(\eps\right)^{n-1}}{2^{n-1}}$, where $A$ depends on $\ell$ and $g$.
Using this and our choice of $\rho_{i,j}\left(\eps\right)$, we estimate
\begin{align*}
\vol\left(\cM^\mathrm{tun}_\epsilon\right)&\leq\vol_g\left(M\right)+\sum_{i<j}^{N\left(\eps\right)}\vol\left(T_j^i\right)\\
&\leq\vol_g\left(M\right)+N\left(\epsilon\right)^2 A \frac{\rho_{i,j}\left(\eps\right)^{n-1}}{2^{n-1}}\\
&=\vol_g\left(M\right)+A\epsilon^{n-1}.
\end{align*}
Therefore, $\vol\left(\cM^\mathrm{tun}_\epsilon\right)$ is bounded above by a constant independent of $k$ and $\epsilon$.
Uniform control of the above quantities implies that
the constant in \eqref{e :pipe} obtained by applying \Cref{p :pipe} to a ball $B_{q_j^i}\left(\rho_{i,j}\left(\eps\right)\right)$ in 
$\cM^\mathrm{tun}_\epsilon\left(k\right)$ is uniformly bounded in $k$ and $\epsilon$ by some constant $A$.

Apply \Cref{p :pipe} iteratively to get:
\begin{align*}
d_\scF\left(\cM^\mathrm{tun}_\epsilon,\cM^\mathrm{str}_\epsilon\right)&\leq\sum_{k=1}^Kd_\scF\left(\cM^\mathrm{tun}_\epsilon\left(k-1\right),
	\cM^\mathrm{tun}_\epsilon\left(k\right)\right)\\
&\leq N\left(\eps\right)^2 A\sqrt{\frac{\rho_{i,j}\left(\eps\right)}{2}}\\
&=A\sqrt{\eps}.
\end{align*}
\end{proof}

\begin{proposition}\label{p: strings close to N}
  The integral current spaces $\cM^\mathrm{str}_\epsilon$ converge to $\cM$
in the intrinsic flat sense as $\epsilon\to0$, where
\[
\cM=\left(M, d,\left\llbracket M\right\rrbracket\right)
\]
\end{proposition}

\begin{proof}
Note that the integral currents and canonical sets of the integral current spaces $\cM$ and $\cM^\mathrm{str}_\epsilon$ are identical for all $\epsilon$. We will apply \cite[Theorem A.1]{HLS} (see \Cref{t: HLS} above). Therefore, 
to show $\cM^\mathrm{str}_\epsilon\to \cM$ in the intrinsic flat sense, it suffices to show 
\[
\sup_{M\times M}|d_{Y_\eps}- d|\to0
\]
and the existence of a $\alpha>0$, independent of $\epsilon$, so that
\[
\frac{1}{\alpha}\leq\frac{d_{Y_\eps}\left(x,y\right)}{ d\left(x,y\right)}\leq\alpha
\]
for all $x,y\in M$. We will now verify the above conditions.

For $\epsilon>0$ and $x,y\in M$, choose points $p_i$ and $p_{i'}$ in the net 
$\{p_i\}_{i=1}^{N\left(\epsilon\right)}$ closest to $x$ and $y$, respectively. Now,
\begin{align*}
d_{Y_\eps}\left(x,y\right)&\leq d_g\left(x,q_{i'}^i\right)+ d\left(q_{i'}^i,q_i^{i'}\right)
	+d_g\left(q_i^{i'},y\right)\\
&\leq6\epsilon+d\left(q_{i'}^i,q_i^{i'}\right),
\end{align*}
where the first inequality follows from the fact that 
$ d\left(q_{i'}^i,q_i^{i'}\right)=d_{Y_\epsilon}\left(q_{i'}^i,q_i^{i'}\right)$
and the second inequality follows from our choice of points $q_i^j\in\partial B_{p_i}\left(\epsilon\right)$.
Now we would like to compare $ d\left(x,y\right)$ and $ d\left(q_{i'}^i,q_i^{i'}\right)$.
Note

\[
 d\left(q_{i'}^i,q_i^{i'}\right)\leq  d\left(x,q_{i'}^i\right)+ d\left(x,y\right)+ d\left(q_i^{i'},y\right).
\]
Therefore, 
\begin{equation}
     d\left(x,y\right)\geq  d\left(q_{i'}^i,q_i^{i'}\right) -6\eps
\end{equation}
Combining these inequalities, we see
\[
d_{Y_\epsilon}\left(x,y\right)- d\left(x,y\right)\leq12\epsilon.
\]
By construction $ d\left(x,y\right)\leq d_{Y_\epsilon}\left(x,y\right)$. Therefore, combining that with the above inequality we have
\[
\left|d_{Y_\eps}\left(x,y\right)-d\left(x,y\right)\right|\leq 12\eps \xrightarrow{\eps\to 0} 0.
\]
For the other condition, we notice that 
\[
1=\frac{ d\left(x,y\right)}{ d\left(x,y\right)}\leq\frac{d_{Y_\eps}\left(x,y\right)}{ d\left(x,y\right)} \leq \frac{d_{g}\left(x,y\right)}{ d\left(x,y\right)} \leq \frac{1}{c},
\]
where $\frac{1}{c}$ is the Lipschitz constant of the identity map from $\left(M,d\right)$ to $\left(M,d_g\right)$. Therefore, take $\alpha= \frac{1}{c}$ in the above condition.
\end{proof}

Now \Cref{t: mainbilip} follows readily. 
\begin{proof}[Proof of \Cref{t: mainbilip}]
    By the triangle inequality, \Cref{p: tunnels close to strings} and \Cref{p: strings close to N} we see that 
\[
	d_\scF\left(\cM^\mathrm{tun}_\eps,\cM\right) 
		\leq d_\scF\left(\cM^\mathrm{tun}_\eps,\cM^\mathrm{str}_\eps\right) + d_\scF\left(\cM^\mathrm{str}_\eps,\cM\right) 
		\to 0
\]
as $\eps \to 0$.
\end{proof}
\Cref{t: main} follows immediately from the next corollary.
\begin{corollary}
    Let $n\geq 3$ and $g$ be a Riemannian metric on the standard smooth $n$-sphere. Then there exists a sequence of Riemannian manifolds $\left(M_j,g_j\right)$ with scalar curvature $R_{g_j}>0$ such that
    \[
    d_\scF\left(\left(M_j,d_{g_j},\left\llbracket {M_j}\right\rrbracket\right)\left(\bS^n,d_{g},\left\llbracket {\bS^n}\right\rrbracket\right)\right)\xrightarrow{j\to\infty} 0.
    \]
\end{corollary}

\begin{proof}
    Consider the identity map $\left(\bS^n,g_{rd}\right)\xrightarrow{\mathrm{id}} \left(\bS^n,g\right)$ and the identity map $\left(\bS^n,g\right) \xrightarrow{\mathrm{id}} \left(\bS^n,g_{rd}\right)$. Both are diffeomorphisms so by compactness there exists a $C>1$ depending on $g_{rd}$ and $g$ such that 
    \[
  \frac{1}{C}d_{g_{rd}}\left(x,y\right)\leq d_{g}\left(x,y\right) \leq C d_{g_{rd}}\left(x,y\right).
    \]
    Therefore, the identity map $\left(\bS^n,C^2g_{rd}\right)\xrightarrow{\mathrm{id}}\left(\bS^n, g\right)$ is a distance decreasing biLipschitz map because 
    \[
    \frac{1}{C^2}\leq \frac{d_{g}\left(x,y\right)}{d_{C^2g_{rd}} \left(x,y\right)}= \frac{1}{C} \frac{d_{g}\left(x,y\right)}{d_{g_{rd}}\left(x,y\right)} \leq 1.
    \]
    Note that $R_{C^2g_{rd}} = \frac{1}{C^2} n\left(n-1\right) >\frac{n\left(n-1\right)}{2C^2}>0$. Finally, apply \Cref{t: mainbilip} to complete the proof.
\end{proof}

Finally, we prove \Cref{c: density}.
\begin{proof}[Proof of \Cref{c: density}]
    Let $M^n$ be a closed oriented smooth manifold and $g$ a Riemannian metric on $M$. Then by \cite{KW} there exists a metric $g_-$ on the smooth manifold $M$ such that the scalar curvature $R_{g_-}=-\frac{1}{2}$.  Consider the identity map $\left(M^n,g_-\right)\xrightarrow{\mathrm{id}} \left(M^n,g\right)$ and $\left(M^n,g\right)\xrightarrow{\mathrm{id}}\left(M^n,g_-\right)$. Both are diffeomorphisms so by compactness there exists a $C>1$ depending on $g$ and $g_-$ such that 
    \[
  \frac{1}{C}d_{g_{-}}\left(x,y\right)\leq d_{g}\left(x,y\right) \leq C d_{g_{-}}\left(x,y\right).
    \]
    Therefore, the identity map $\left(M^n,C^2g_{-}\right)\xrightarrow{\mathrm{id}}\left(M^n, g\right)$ is a distance decreasing biLipschitz map, i.e., 
    \[
    \frac{1}{C^2}\leq \frac{d_{g}\left(x,y\right)}{d_{C^2g_{-}} \left(x,y\right)}= \frac{1}{C} \frac{d_{g}\left(x,y\right)}{d_{g_{-}}\left(x,y\right)} \leq 1.
    \]
    Moreover, $R_{C^2g_{-}}=\frac{1}{C^2}R_{g_{-}}= -\frac{1}{2C^2}\geq -\frac{1}{2}.$
    Now apply \Cref{t: mainbilip} to complete the proof.
\end{proof}

\section{Bilipschitz maps and Pipe Filling}\label{sec: Bilip}
In this section, we will prove a slight generalization of \Cref{t: mainbilip}.
\begin{theorem}\label{t: 6.1}
    Let $\kappa\in\R$, $n\geq 3$, and $\left(M^n,g\right)$ be a closed oriented Riemannian manifold with scalar curvature $R_g\geq\kappa$. Let $d$ be a distance function such that there exists a strictly distance decreasing biLipshitz map 
    \[
    F:\left(M,d_g\right)\to \left(X,d\right)
    \]
   where $d_g$ is the induced distance function from the Riemannian metric. Then there exists a sequence of Riemannian manifolds $\left(M^n_j,g_j\right)$ with scalar curvature $R_{g_j}\geq \kappa -\frac{1}{j}$ 
   such that
     \begin{equation*}
    d_{\scF}\left(\left(M_j,d_{g_j},\left\llbracket M_j\right\rrbracket\right), \left(X,d,F_\#\left\llbracket M\right\rrbracket\right)\right)\xrightarrow{j\rightarrow\infty}0.
\end{equation*}   
\end{theorem}
This theorem will follow from \Cref{t: mainbilip} and the following lemma. First, we state a definition
\begin{deff}
     A map $F:\left(X,d_X\right)\to\left(Y,d_Y\right)$ between to metric spaces is called a $\left(c,C\right)$-biLipschitz if $F$ is biLipschitz and there are constants $c,C>0$ such that for all $p,q\in X$
     \[
     cd_X\left(p,q\right)\leq d_Y\left(F\left(p\right),F\left(q\right)\right) \leq Cd_X\left(p,q\right).
     \]
\end{deff}
\begin{lemma}\label{l: mainbilip}
    Let $\left(X, d, T\right)$ be an integral current space. Let $F:\left(X,d\right)\to \left(X,\widetilde{d}\right)$ be a $\left(c,C\right)$-biLipschitz map. Define the integral current space $\left(X,\widetilde{d},F_\#T\right)$. Define the metric space $\left(X,F^*{\widetilde{d}}\right)$, where $F^*{\widetilde{d}}\left(x,y\right)=\widetilde{d}\left(F\left(x\right),F\left(y\right)\right)$.
    
    Then $\mathrm{id}:\left(X,d\right)\to \left(X,F^*{\widetilde{d}}\right)$ is a $\left(c,C\right)$-biLipschitz map and $\left(X,F^*{\widetilde{d}},T\right)$ is an integral current space. Moreover, $F:\left(X,F^*{\widetilde{d}}, T\right)\to \left(X,\widetilde{d},F_\#T\right)$ is a current preserving isomorphism and  
    \[d_\scF\left(\left(X,F^*{\widetilde{d}}, T\right), \left(X,\widetilde{d},F_\#T\right)\right)=0.\]
\end{lemma}
\begin{proof}
    First, we check $F^*{\widetilde{d}}$ is a distance function. Let $x,y,z\in X$
    \[
    F^*{\widetilde{d}}\left(x,y\right)=\widetilde{d}\left(F\left(x\right),F\left(y\right)\right)\geq 0.
    \]
    Moreover, $x=y$ iff $F\left(x\right)=F\left(y\right)$ since $F$ is a bijection. Therefore,
    \[
    F^*{\widetilde{d}}\left(x,y\right)=0 \iff x=y.
    \]
    $F^*{\widetilde{d}}$ is symmetric:
    \[
    F^*{\widetilde{d}}\left(x,y\right) = \widetilde{d}\left(F\left(x\right),F\left(y\right)\right)= \widetilde{d}\left(F\left(y\right),F\left(x\right)\right)=F^*{\widetilde{d}}\left(y,x\right).
    \]
   Note that as $F$ is a bijection we have for any $w\in X$ there exists a $z\in X$ such that $F\left(z\right)=w$. Lastly, we check the triangle inequality. 
    \begin{align*}
         F^*{\widetilde{d}}\left(x,y\right) &= \widetilde{d}\left(F\left(x\right),F\left(y\right)\right) \leq \widetilde{d}\left(F\left(x\right),w\right) + \widetilde{d}\left(w,F\left(y\right)\right) = \widetilde{d}\left(F\left(x\right),F\left(z\right)\right) + \widetilde{d}\left(F\left(z\right),F\left(y\right)\right) \\&= F^*{\widetilde{d}}\left(x,z\right) +F^*{\widetilde{d}}\left(z,y\right).
    \end{align*}
    Therefore, $F^*{\widetilde{d}}$ is a distance function.
    Let $x,y\in X$, then 
    \[
  c d\left(x,y\right) \leq F^*{\widetilde{d}}\left(x,y\right)=\widetilde{d}\left(F\left(x\right),F\left(y\right)\right) \leq C d\left(x,y\right).
    \]
    Thus, $\mathrm{id}:\left(X,d\right)\to \left(X,F^*{\widetilde{d}}\right)$ is a $\left(c,C\right)$-biLipschitz map and $\left(X,F^*{\widetilde{d}},T\right)$ is an integral current space.
    Let $x,y\in X$ and observe $F:\left(X,F^*{\widetilde{d}}, T\right)\to \left(X,\widetilde{d},F_\#T\right)$ is an isometry since
    \[
    F^*{\widetilde{d}}\left(x,y\right)=\widetilde{d}\left(F\left(x\right),F\left(y\right)\right).
    \]
    Moreover, $F:\left(X,F^*{\widetilde{d}}, T\right)\to \left(X,\widetilde{d},F_\#T\right)$ is current preserving.
\end{proof}
\begin{proof}[Proof of \Cref{t: 6.1}]
     Let $\kappa\in\R$, $n\geq 3$, and $\left(M^n,g\right)$ be a closed oriented Riemannian manifold with scalar curvature $R_g\geq\kappa$. Let $d$ be a distance function such that there exists a strictly distance decreasing biLipshitz map 
    \[
    F:\left(M,d_g\right)\to \left(X,d\right)
    \]
   where $d_g$ is the induced distance function from the Riemannian metric. Therefore, there exists constants $c\leq C<1$ such that 
   \[
     cd_g\left(p,q\right)\leq d\left(F\left(p\right),F\left(q\right)\right) \leq Cd_g\left(p,q\right).
     \]
   Consider the following integral current spaces $\left(X,d,F_\#\left\llbracket M\right\rrbracket\right)$ and $\left(M,F^*d,\left\llbracket M\right\rrbracket\right)$. By \Cref{l: mainbilip}, we have that $F^*d:M\times M\to \bR$ satisfies
    \[
    cd_g\left(x,y\right)\leq F^*d\left(x,y\right) \leq Cd_g\left(x,y\right) < d_g\left(x,y\right).
    \]
    Therefore, using \Cref{t: mainbilip} we see that there exists a sequence of Riemannian manifolds $\left(M^n_j,g_j\right)$ with scalar curvature $R_{g_j}\geq \kappa -\frac{1}{j}$
   such that
   \[d_{\scF}\left(\left(M_j,d_{g_j},\left\llbracket M_j\right\rrbracket\right), \left(M,F^*d,\left\llbracket M\right\rrbracket\right)\right)\xrightarrow{j\rightarrow\infty}0. \]
  Finally,  we have
     \begin{align*}
         d_{\scF}\left(\left(M_j,d_{g_j},\left\llbracket M_j\right\rrbracket\right), \left(X,d,F_\#\left\llbracket M\right\rrbracket\right)\right) &\leq  d_{\scF}\left(\left(M_j,d_{g_j},\left\llbracket M_j\right\rrbracket\right), \left(M,F^*d,\left\llbracket M\right\rrbracket\right)\right)\\
         &\quad\quad+  d_{\scF}\left(\left(M,F^*d,\left\llbracket M\right\rrbracket\right), \left(X,d,F_\#\left\llbracket M\right\rrbracket\right)\right) \\
         &= d_{\scF}\left(\left(M_j,d_{g_j},\left\llbracket M_j\right\rrbracket\right), \left(M,F^*d,\left\llbracket M\right\rrbracket\right)\right)\xrightarrow{j\rightarrow\infty}0, 
     \end{align*}
     where in the final equality, we use the conclusion of \Cref{l: mainbilip} that $F:\left(M, F^*d,\left\llbracket M\right\rrbracket\right) \to \left(X, d, F_\#\left\llbracket M\right\rrbracket\right)$ is a current-preserving isometry.
\end{proof}

\bibliographystyle{unsrt}
\bibliography{refs}

\begin{thebibliography}{10}

\bibitem{Gro}
Misha Gromov.
\newblock Plateau-{S}tein manifolds.
\newblock {\em Cent. Eur. J. Math.}, 12(7):923--951, 2014.

\bibitem{G}
Misha Gromov.
\newblock Dirac and {P}lateau billiards in domains with corners.
\newblock {\em Cent. Eur. J. Math.}, 12(8):1109--1156, 2014.

\bibitem{B}
Richard~H. Bamler.
\newblock A {R}icci flow proof of a result by {G}romov on lower bounds for
  scalar curvature.
\newblock {\em Math. Res. Lett.}, 23(2):325--337, 2016.

\bibitem{LT}
Man-Chun Lee and Peter~M. Topping.
\newblock Metric limits of manifolds with positive scalar curvature, 2022.

\bibitem{SW}
Christina Sormani and Stefan Wenger.
\newblock The intrinsic flat distance between {R}iemannian manifolds and other
  integral current spaces.
\newblock {\em J. Differential Geom.}, 87(1):117--199, 2011.

\bibitem{BKS}
J.~Basilio, D.~Kazaras, and C.~Sormani.
\newblock An intrinsic flat limit of {R}iemannian manifolds with no geodesics.
\newblock {\em Geom. Dedicata}, 204:265--284, 2020.

\bibitem{S}
Paul Sweeney, Jr.
\newblock Examples for scalar sphere stability, 2023.

\bibitem{AK}
Luigi Ambrosio and Bernd Kirchheim.
\newblock Currents in metric spaces.
\newblock {\em Acta Math.}, 185(1):1--80, 2000.

\bibitem{LS}
Sajjad Lakzian and Christina Sormani.
\newblock Smooth convergence away from singular sets.
\newblock {\em Comm. Anal. Geom.}, 21(1):39--104, 2013.

\bibitem{HLS}
Lan-Hsuan Huang, Dan~A. Lee, and Christina Sormani.
\newblock Intrinsic flat stability of the positive mass theorem for graphical
  hypersurfaces of {E}uclidean space.
\newblock {\em J. Reine Angew. Math.}, 727:269--299, 2017.

\bibitem{D}
J\'{o}zef Dodziuk.
\newblock Gromov-{L}awson tunnels with estimates.
\newblock In {\em Analysis and geometry on graphs and manifolds}, volume 461 of
  {\em London Math. Soc. Lecture Note Ser.}, pages 55--65.

\bibitem{GL}
Mikhael Gromov and H.~Blaine Lawson, Jr.
\newblock The classification of simply connected manifolds of positive scalar
  curvature.
\newblock {\em Ann. of Math. (2)}, 111(3):423--434, 1980.

\bibitem{KW}
Jerry~L. Kazdan and F.~W. Warner.
\newblock Scalar curvature and conformal deformation of {R}iemannian structure.
\newblock {\em J. Differential Geometry}, 10:113--134, 1975.

\end{thebibliography}
\end{document}